\newcommand{\du}[2]{
\ensuremath{	
\begin{matrix}
	\fbox{#1}&    \\
	&   \fbox{#2}
\end{matrix}
}}
\newcommand{\sx}{\secx}
\newcommand{\secx}{\sec x}
\newcommand{\tx}{\tanx}
\newcommand{\dsum}{\di\sum}
\newcommand{\down}{\downarrow}
\DeclareFontFamily{U}{mathx}{\hyphenchar\font45}
\DeclareFontShape{U}{mathx}{m}{n}{
      <5> <6> <7> <8> <9> <10>
      <10.95> <12> <14.4> <17.28> <20.74> <24.88>
      mathx10
      }{}
\DeclareSymbolFont{mathx}{U}{mathx}{m}{n}
\DeclareMathAccent{\widecheck}{0}{mathx}{"71}
\renewcommand{\kill}[1]{}
\newcommand{\dummy}[1]{\mbox{}}
\newcommand{\xequal}[2][]{\ext@arrow 0055{\equalfill@}{#1}{#2}}
\def\equalfill@{\arrowfill@\Relbar\Relbar\Relbar}
\newcommand{\mto}{\mapsto}
\newcommand{\set}[1]{\ensuremath{\left\{{#1}\right\}}}
\renewcommand{\k}{\ensuremath{\ol{\mathrm{P}}}}
\newcommand{\phx}{\ph{x}}
\newcommand{\h}{\hline}
\newcommand{\si}[1]{\ensuremath{\sin\left({#1}\right)}}
\renewcommand{\k}[1]{\ensuremath{\left({#1}\right)}}
\newcommand{\ds}{\dots}
\newcommand{\bca}{\begin{cases}}
\newcommand{\eca}{\end{cases}}
\newcommand{\mug}{\ensuremath{\infty}}
\newcommand{\tanx}{\ensuremath{\tan x}}
\newcommand{\ff}[2]{\ensuremath{\di\fr{#1}{#2}}}
\newcommand{\s}[1]{\ensuremath{\di\int{#1}\,dx}}
\newcommand{\bpic}{\begin{picture}}\newcommand{\epic}{\end{picture}}
\newcommand{\beda}{\begin{edaenumerate}}
\newcommand{\eeda}{\end{edaenumerate}}
\newcommand{\cd}{\cdots}
\newcommand{\st}{\strut}
\newcommand{\q}{\quad}
\newcommand{\up}{\uparrow}
\newcommand{\bq}{\begin{quote}}\newcommand{\eq}{\end{quote}}
\newcommand{\sig}{\sigma}
\newcommand{\ti}{\times}
\newcommand{\vi}{\\[.1in]}
\newcommand{\viii}{\\[.3in]}
\newcommand{\be}{\begin{enumerate}}\newcommand{\ee}{\end{enumerate}}
\newcommand{\bce}{\begin{center}}\newcommand{\ece}{\end{center}}
\newcommand{\bde}{\begin{description}}\newcommand{\ede}{\end{description}}
\newcommand{\bri}{\begin{flushright}}\newcommand{\eri}{\end{flushright}}
\newcommand{\bb}{\begin{block}}\newcommand{\eb}{\end{block}}
\newcommand{\bt}{\begin{thm}}\newcommand{\et}{\end{thm}}
\newcommand{\bpf}{\begin{proof}}\newcommand{\epf}{\end{proof}}
\newcommand{\bex}{\begin{ex}}\newcommand{\eex}{\end{ex}}
\newcommand{\bexr}{\begin{exr}}\newcommand{\eexr}{\end{exr}}
\newcommand{\bft}{\begin{fact}}\newcommand{\eft}{\end{fact}}
\newcommand{\brk}{\begin{rmk}}\newcommand{\erk}{\end{rmk}}
\newcommand{\ba}{\begin{align*}}\newcommand{\ea}{\end{align*}}
\newcommand{\bexe}{\begin{exe}}\newcommand{\eexe}{\end{exe}}
\newcommand{\bit}{\begin{itemize}}\newcommand{\eit}{\end{itemize}}
\newcommand{\bcm}{\begin{comment}}\newcommand{\ecm}{\end{comment}}
\newcommand{\hf}{\hfill}
\newcommand{\fr}{\frac}
\newcommand{\bd}{\begin{defn}}\newcommand{\ed}{\end{defn}}
\newcommand{\bp}{\begin{prop}}\newcommand{\ep}{\end{prop}}
\newcommand{\eh}{\emph}
\newcommand{\fb}{\fbox}
\newcommand{\mb}{\mbox}
\newcommand{\te}{\text}\newcommand{\ph}{\phantom}
\newcommand{\lef}{\left}\newcommand{\ri}{\right}
\newcommand{\then}{\Longrightarrow}
\newcommand{\di}{\displaystyle}
\newcommand{\np}{\newpage}
\renewcommand{\up}{\uparrow}
\renewcommand{\int}{\in T}
\renewcommand{\s}{\sigma}
\newcommand{\F}{\mathcal{F}}
\theoremstyle{definition}
\newtheorem{thm}{Theorem}[section]
\newtheorem{lem}[thm]{Lemma}
\newtheorem{prop}[thm]{Proposition}
\newtheorem{exr}[thm]{Exercise}
\newtheorem{ob}[thm]{Observation}
\newtheorem{ex}[thm]{Example}
\newtheorem{defn}[thm]{Definition}\newtheorem{rmk}[thm]{Remark}
\newtheorem{fact}[thm]{Fact}
\newtheorem{block}[thm]{}
\newtheorem*{exe}{Exercise}
\renewcommand{\s}{\sig}
\renewcommand{\phx}{\ph{X}}
\title[]{A new refinement of Euler numbers on counting 
alternating permutations
}
\author[Masato Kobayashi]{Masato Kobayashi$^{*}$}
\date{\today}
\address{Department of Engineering\\
Kanagawa University, 3-27-1 Rokkaku-bashi, Yokohama 221-8686, Japan.}
\keywords{alternating permutations, Euler numbers, 
formal power series, secant numbers, tangent numbers.}
\thanks{*Department of Engineering, Kanagawa University, Japan}
\subjclass[2010]{Primary:05A05;\,Secondary:11B68.}
\renewcommand{\F}{\ensuremath{\mathcal{F}}}
\begin{document}%:\UTF{2022}%
\renewcommand{\s}{\sigma}

%\[
%\s^{*}(i)=n-\s(i)+1
%\s_{*}(i)=\s(n-i+1)
%\s^{*}_{*}(i)=n-\s(n-i+1)+1
%\]
%
%\begin{prop}
%\begin{enumerate}
%	\item $\s$ is up-down $\iff$ $\s^{*}$ is down-up.
%	\item Suppose n is odd.
%	then 
%	$\s$ is up-down $\iff$ $\s_{*}$ is up-down.
%	\item Suppose n is even.
%	then 
%	$\s$ is up-down $\iff$ $\s_{*}$ is down-up.
%	moreover, 	$\s$ is up-down $\iff$ $\s^{*}_{*}$ is up-down.
%\end{enumerate}
%\end{prop}
%
%\begin{defn}
%$\s$ is inductive if $\{\s(1), \s(n)\}=\{1, n\}$
%\end{defn}
%
%suippose n is even.
%
%\[
%U_{n}=\{\s\in A_{n}\mid \s is up-down\}
%\]
%\[
%V_{n}=\{\s\in A_{n}\mid \s is inductive \}
%\]
%\[
%W_{n}=\{\s\in A_{n}\mid \s is not inductive \}
%\]
%\[
%U_{n}=V_{n}\cup W_{n}
%\]
%
%$\s\mto \s^{*}_{*}$ is a bijection on $U_{n}$.
%to be more precise, 
%
%\[
%\s\in V_{n}\iff \s^{*}_{*}\in V_{n}
%\]
%\[
%\s\in W_{n}\iff \s^{*}_{*}\in W_{n}
%\]
%
%
%\np%----------------------------------------------------
%
%symbol $E_{n}$
%\np%----------------------------------------------------
%
%symbol 
%partition of $E_{n}$
%
%\np%----------------------------------------------------
%
%in terms of gf
\np%----------------------------------------------------

\begin{abstract} %:%
We often encounter surprising interactions with 
two topics from seemingly different areas.
At a crossroads of calculus and combinatorics, 
the generating function of secant and tangent numbers (Euler numbers)
provides enumeration of alternating permutations. 
In this article, we present a new refinement of Euler numbers to answer the combinatorial question on some particular relation of 
Euler numbers proved by Heneghan-Petersen, Power series for up-down min-max permutations, College Math. Journal, Vol. 45, No. 2 (2014), 83--91.
\end{abstract}
\maketitle
\tableofcontents
\fboxsep4pt

\newcommand{\eup}{E^{\rotatebox{0}{$\uparrow$}}}
\newcommand{\edown}{E^{\rotatebox{0}{$\downarrow$}}}

\newcommand{\ene}{E^{\nearrow}}
\newcommand{\enw}{E^{\nwarrow}}
\newcommand{\ena}{\ene}
\newcommand{\enb}{\enw}

%\eup
%\edown

\section{Introduction}%:\UTF{2022}%

\subsection{calculus, combinatorics, power series}

We often encounter surprising interactions with 
two topics from seemingly different areas.
One such example is a connection between power series in 
calculus and enumeration in combinatorics.
On calculus side, we study \eh{Maclaurin series};
they are convergent power series in the form $\dsum_{n=0}^{\mug} a_{n}x^{n} $ with $(a_{n})$ a sequence of real numbers. Many real-variable differential functions are expressible in this way. A simple example is a geometric series with $a_{n}=1$ for all $n$:
\[
F(x)=1+x+x^{2}+x^{3}+\cd, \q |x|<1.
\]
%Trigonometric functions are examples of such series:
%\begin{align*}
%	\sinx&
%	=\dsum_{n=0}^{\mug}\ff{(-1)^{n}}{(2n+1)!}x^{2n+1}
%	=x-\ff{1}{3!}x^{3}+\ff{1}{5!}x^{5}-\cd,
%	\\\cosx&=
%		\dsum_{n=0}^{\mug}\ff{(-1)^{n}}{(2n)!}x^{2n}
%=1-\ff{1}{2!}x^{2}+\ff{1}{4!}x^{4}-\cd.
%\end{align*}
On combinatorics side, we study a \eh{formal power series}; it is a formal sum of the form $\dsum_{n=0}^{\mug} a_{n}x^{n} $ with $(a_{n})$ a sequence of real numbers  (often integers). Take, again, this as an example:
\[
F(x)=1+x+x^{2}+x^{3}+\cd
\]
with $a_{n}=1$ for all $n$. 
Formally $F(x)$ is a ``rational function" as follows:
\begin{align*}
	F(x)&=1+x(1+x+x^{2}+\cd)=1+xF(x),
	\\(1-x)F(x)&=1,
	\\F(x)&=\ff{1}{1-x}.
\end{align*}
In this computation, we do not need to worry about convergence (which indeed makes perfect sense mathematically; see Wilf \cite{wilf} for such details).
An important class of such series is \eh{exponential generating functions}:
\[
F(x)=\sum_{n=0}^{\mug}a_{n}\ff{x^{n}}{n!}.
\]
In this article, we revisit the interactions between two particular trigonometric functions ($\secx$, $\tanx$) and enumeration of alternating permutations as a follow-up of Heneghan-Petersen \cite{hp}.

\subsection{alternating permutations, Euler numbers}
A \eh{permutation} of degree $n$ is a bijection $\s:\{1, \ds, n\}\to\{1, \ds, n\}$. By $S_{n}$ we denote the set of all permutations of degree $n$.
Say $\s\in S_{n}$ is 
\eh{up-down} if 
\[
\s(1)<\s(2)>\s(3)<\s(4)>\cd.
\]
Say $\s$ is \eh{down-up} if 
\[
\s(1)>\s(2)<\s(3)>\s(4)<\cd.
\]
Further, $\s$ is \eh{alternating} if it is either up-down or down-up;
 in particular, we understand $\s:\{1\}\to \{1\}, 1\mto 1$ is alternating.
 Let $A_{n}$ denote the set of all alternating permutations of degree $n$. Note that some researchers call precisely one of up-down and down-up permutations ``alternating". As mentioned above, we mean both throughout.

Clearly, there is a one-to-one correspondence $\s\leftrightarrow \s^{*}$ 
between up-down and down-up permutations of degree $n$
 where $\s^{*}(i)=n-\s(i)+1$.
For example, $\s=2514376$ (one-line notation: $\s(1)=2, \s(2)=5, \ds$) while $\s^{*}=6374512$.

Now, let us introduce the sequence $(E_{n})_{n=0}^{\mug}$:
First, we formally set $E_{0}=1$. For $n\ge 1$, let 
%be the number of up-down permutations in $S_{n}$.
%Then, 
%\[
%\sig=\cd n\cd n-1\]
\begin{align*}
	E_{n}&=|\set{\te{up-down permutations in $S_{n}$}}|
	\\&=|\set{\te{down-up permutations in $S_{n}$}}|.
\end{align*}
Observe that $|A_{1}|=E_{1}=1$ and $|A_{n}|=2E_{n}$ for $n\ge 2$. 

The following surprising result goes back to Andr\'{e} \cite{andre} in 1879:
\[
\dsum_{n=0}^{\mug}
E_{n}\ff{x^{n}}{n!}=\sx+\tx.
\]
In particular, 
\begin{align*}
	\sx&=
	\dsum_{n=0}^{\mug}E_{2n}\ff{x^{2n}}{(2n)!}
	=1+1\ff{x^{2}}{2!}+5\ff{x^{4}}{4!}+61\ff{x^{6}}{6!}+1385\ff{x^{8}}{8!}+\cd,
	\\\tx&=
	\dsum_{n=0}^{\mug}E_{2n+1}\ff{x^{2n+1}}{(2n+1)!}
	=1x+2\ff{x^{3}}{3!}+16\ff{x^{5}}{5!}+272\ff{x^{7}}{7!}+7936\ff{x^{9}}{9!}+\cd
\end{align*}
are its odd and even part, respectively. 
For this reason, 
positive integers $(E_{2n})_{n=0}^{\mug}$ are called 
\eh{secant numbers} and $(E_{2n+1})_{n=0}^{\mug}$ \eh{tangent numbers}.
Together, we call $(E_{n})_{n=0}^{\mug}$ \eh{Euler numbers}.
%We can think these as formal power series
%rather than Taylor series.

{\renewcommand{\arraystretch}{1.75}
\begin{table}
\caption{secant and tangent numbers}
\begin{center}
\mb{}\hf
	\begin{tabular}{cccccccc}\h
	$E_{0}$&	$E_{2}$&$E_{4}$&$E_{6}$&$E_{8}$&$\cd$	\\\h
	1&	1&5&61&1385&$\cd$	\\\h
\end{tabular}
\hf
\begin{tabular}{cccccccc}\h
	$E_{1}$&	$E_{3}$&$E_{5}$&$E_{7}$&$E_{9}$&$\cd$	\\\h
	1&	2&16&272&7936&$\cd$	\\\h
\end{tabular}
\hf
\mb{}
\end{center}
\end{table}
}

\renewcommand{\si}{\sig}

%\[
%E_{n}=
%\sum E_{i}E_{j}
%\]

%Formal power series are powerful method to 
%study sequences.
%``Pythagorean theorem"
%$\sec^{2}x-\tan^{2}x=1$ is 
%\[
%(\secx+\tx)(\sx-\tx)=1\]
%For example, this leads to not so obvious the following identity at a glance.
%
%\[
%\dsum_{i=0}^{n}
%\binom{n}{i}(-1)^{i}E_{i}E_{n-i}=0 \q (n\ge 1)
%\]
%such as 
%\[
%\binom{6}{0}E_{6}E_{0}
%-
%\binom{6}{1}E_{5}E_{1}
%+
%\binom{6}{2}E_{4}E_{2}
%-
%\binom{6}{3}E_{3}E_{3}
%+
%\binom{6}{4}E_{2}E_{4}
%-
%\binom{6}{5}E_{1}E_{5}
%+
%\binom{6}{6}E_{0}E_{6}
%=272-272=0.
%\]
%\[
%2E_{n+1}= \dsum_{k=0}^{n}\binom{n}{k}E_{k}E_{n-k},\q 
%n\ge1.
%\]

\section{min-max and max-min permutations}%:\UTF{2022}%

How can we understand Euler numbers?
One simple idea is a \eh{refinement} of $E_{n}$; 
Recently, Heneghan-Petersen in 2013 introduced 
the refinement by just two sequences $\left(E_{n}^{\nearrow}\right)_{n\ge 2}$, $\lef(E_{n}^{\nwarrow}\ri)_{n\ge2}$ under the name of \eh{min-max} and \eh{max-min} permutations:
\begin{defn}
Say an alternating permutation $\s\in S_{n}$ $(n\ge2)$ is \eh{min-max} if 
\[
\s^{-1}(1)<\s^{-1}(n).
\]
Say it is \eh{max-min} if 
\[
\s^{-1}(n)<\s^{-1}(1).
\]
More informally, this means in one-line notation 
\begin{align*}
	\s&=\cd 1\cd n\cd \q \te{(min-max)},
	\\\s&=\cd n\cd 1\cd \q \te{(max-min)}.
\end{align*}
Define 
$E_{n}^{\nearrow}$, 
$E_{n}^{\nwarrow}$ 
to be the number of min-max, max-min 
up-down permutations of degree ${n}$, respectively.
\end{defn}
Each alternating permutation $\s\in A_{n}$ $(n\ge 2)$ is either min-max or max-min 
 so that those two numbers give a refinement of $E_{n}$:
\[
E_{n}=E_{n}^{\nearrow}+E_{n}^{\nwarrow}.
\]

%Certainly, it is easy to deal with extremal elements 
%1, n in such analysis.
%Accordingly, 
Heneghan-Petersen further introduced generating functions
\[
\ene(x)=
\dsum_{n=0}^{\infty}{E_{n+2}^{\nearrow}}\ff{x^{n}}{n!}
, \q 
\enw(x)=
\dsum_{n=0}^{\infty}{E_{n+2}^{\nwarrow}}\ff{x^{n}}{n!}.
\]
They proved \cite[Section 4]{hp} that as a formal power series, 
we have 
\[
\ene(x)=\secx\cdot \secx(\secx+\tx),
\]
\[
\enw(x)=\secx\cdot \tx(\secx+\tx).
\]
%\[
%E(x)=\secx+\tx=
%\dsum_{n=0}^{\infty}{\ff{E_{n}}{n!}}x^{n}
%\]
%we will below do some similar argument, though.
%apparently, these series consist of $\sx$ and $\tx$.
%An easy guess is that 
%What 
%\[
%\tanx\cdot \tanx (\sx+\tx)\]
% means?
Moreover, they showed the following relations:
\[
E_{2n+1}^{\nearrow}-E_{2n+1}^{\nwarrow}=0,\]
\[
E_{2n}^{\nearrow}-E_{2n}^{\nwarrow}=E_{2n-2}.\] 
Although they did prove these equalities, they said at the end:
\begin{quote}
\eh{We are left with a tantalizing combinatorial question:
Is there a bijective explanation for}
\[
E_{2n}^{\nearrow}-
E_{2n}^{\nwarrow}=E_{2n-2}?\]
\end{quote}
Our goal is to answer this question.
For this purpose, we will introduce another refinement of $E_{n}$ and then discuss its details.

{\renewcommand{\arraystretch}{1.75}
\begin{table}
\caption{The refinement of Euler numbers in \cite{hp}}
\begin{center}
	\begin{tabular}{c|cccccccccccc}\h
$n$	&	0&1&2&3&4&5&6&7&8&9&$\cd$	\\\h
$E_{n}$	&1	&1&1&2&5&16&61&272&1385&7936&$\cd$	\\\h\h
$\ene_{n}$	&---&---	&	1&1&3&8&33&136&723&3968&$\cd$\\\h
$\enw_{n}$	&---&---	&0&1&2&8&28&136&662&3968&$\cd$	\\\h
\end{tabular}
\end{center}
\end{table}}

%$E_{n}$ is even if and only if $n$ is odd.

\section{New refinement of Euler numbers}%:\UTF{2022}%

Let $\s$ be an alternating permutation in $S_{n}$ ($n\ge 2$). 
To emphasize ups and downs of such permutations, 
we introduce special notation, a \eh{boxed two-row expression} as follows:
\[
\s=
\begin{matrix}
\fb{5}&&\fb{7}&&\fb{6}&&\fb{4}\st\\
&\fbox{3}&&\fb{1}&&\fb{2}&
\end{matrix}
\]
for $\s=5371624$. 
Besides, let us prepare little terminology for convenience.
%An empty box will be used for enumeration 
%in many situations below.
%\[
%\s=\ud{3}{5}\ud{1}{7}\ud{2}{6}\ud{4}{\ph{X}}
%\]
\begin{defn}
By the \eh{upper row} of an up-down permutation $\s$ we mean even positions $\{2i\mid 1\le 2i\le n\}$.
For a down-up permutation, it means 
odd positions $\{2i+1\mid 1\le 2i+1\le n\}$.
\end{defn}

\begin{ob}\label{ob1}
$n$, the largest number, must appear in the upper row of $\s$.
\end{ob}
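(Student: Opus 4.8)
The plan is to reduce this statement to the obvious fact that the global maximum of a sequence cannot occur at a strict local minimum. The key observation is that the \emph{upper row}, as defined, is precisely the set of positions at which $\s$ attains a local maximum (a peak), so the claim becomes: the value $n$ cannot sit at a valley.

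First I would translate the definition into the language of peaks and valleys. For an up-down $\s$, the defining chain $\s(1)<\s(2)>\s(3)<\s(4)>\cd$ shows that the even positions are exactly the local maxima and the odd positions the local minima; for a down-up $\s$, the chain $\s(1)>\s(2)<\s(3)>\cd$ reverses this, so that the odd positions (which are the upper row in this case) are again the local maxima. Thus in both cases the upper row consists exactly of the peak positions, and the lower row of the valley positions.

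Next I would argue by contradiction. Suppose $\s(k)=n$ with $k$ a lower-row position, i.e. a valley. Since $n\ge 2$, position $k$ has at least one neighbor, and being a valley it satisfies $\s(k)<\s(k-1)$ or $\s(k)<\s(k+1)$ for whichever neighbor is present. Either inequality produces a value strictly larger than $n$, contradicting the maximality of $n$. Hence $\s(k)=n$ forces $k$ to be a peak, so $n$ lies in the upper row.

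The only point requiring a little care is the behavior at the two endpoints $k=1$ and $k=n$, where only one neighbor exists; this is where I expect the modest bookkeeping to concentrate, though it is routine rather than a genuine obstacle. Here I would check the parity of $n$ against the type of $\s$ to confirm that an endpoint of lower-row type indeed comes equipped with a strictly larger neighbor: for an up-down $\s$, position $1$ satisfies $\s(1)<\s(2)$, and if $n$ is odd then $\s(n-1)>\s(n)$, so the single-neighbor inequality still holds; the down-up case is symmetric. With these endpoint cases dispatched, the one-line local-minimum argument completes the proof.
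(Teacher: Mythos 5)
Your proof is correct and is exactly the argument the paper leaves implicit when it declares the observation ``clear'': the upper row consists precisely of the peak positions of $\s$, and the maximum value $n$ cannot occupy a valley since every lower-row position (including the endpoints, which you check carefully by parity) has at least one strictly larger neighbor. Your endpoint bookkeeping is sound in all four parity/type cases, so the write-up is a complete and faithful expansion of the paper's intended one-line reasoning.
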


This is clear. More important is the following:
\begin{ob}\label{ob2}
$n-1$, the second largest number, must appear 
either in the upper row of $\s$, or at the extremal position(s) in the lower row; for $n$ even, it means the $1$st position and for $n$ odd, 1st and $n$-th positions. This is because there does not exist two numbers 
in $\{1, 2, \ds, n\}$ which are strictly greater than $n-1$. 
Note also that $n$ must appear right next to $n-1$ (at the second or $(n-1)$-st position) in all such cases.
\end{ob}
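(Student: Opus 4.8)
The plan is first to translate the diagram into a statement about local extrema. By the definition of up-down and down-up permutations together with the definition of the upper row, every position in the upper row is a \emph{peak} of $\s$ (a value larger than both of its neighbours) and every position in the lower row is a \emph{valley} (a value smaller than both neighbours, or smaller than its single neighbour at an endpoint). Once this dictionary is recorded, Observation \ref{ob2} becomes a short pigeonhole argument, exactly as the statement already hints.

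The key step is to rule out $n-1$ sitting at an \emph{interior} lower-row position. Suppose $\s(i)=n-1$ with $1<i<n$ and $i$ in the lower row. Then $\s(i)$ is a valley, so both neighbours satisfy $\s(i-1)>n-1$ and $\s(i+1)>n-1$; but $n$ is the only element of $\{1,\ldots,n\}$ exceeding $n-1$, forcing $\s(i-1)=\s(i+1)=n$, which contradicts injectivity of $\s$. Hence a lower-row occurrence of $n-1$ is possible only at a position with a single neighbour, that is, at an endpoint $1$ or $n$ of the permutation. A short parity check then pins down which endpoints are valleys: for an up-down $\s$, position $1$ is always a valley, while position $n$ is a valley precisely when $n$ is odd, which reproduces the listed positions (the $1$st position for $n$ even, the $1$st and $n$-th for $n$ odd); the down-up case is identical in method, with peaks read off the odd positions and the admissible endpoints recomputed from the parity of $n$.

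It remains to verify the appended remark that $n$ must be adjacent to $n-1$ in precisely these boundary cases. If $n-1$ occupies position $1$, its unique neighbour $\s(2)$ must exceed $n-1$ since position $1$ is a valley, hence $\s(2)=n$; symmetrically, if $n-1$ occupies position $n$ then $\s(n-1)=n$. Thus $n$ lands at the second or the $(n-1)$-st position, and in either case it occupies the upper row, in agreement with Observation \ref{ob1}.

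I do not anticipate a genuine obstacle: the whole argument rests on the single fact that at most one entry can exceed $n-1$. The only point that really requires care is the bookkeeping matching ``extremal position of the lower row'' to the actual endpoint $1$ or $n$, since this depends jointly on the parity of $n$ and on whether $\s$ is up-down or down-up; stating the peak/valley dictionary explicitly at the outset is what keeps that case analysis clean.
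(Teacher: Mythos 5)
Your proposal is correct and follows the paper's own reasoning: the paper justifies the observation with exactly the pigeonhole fact that no two elements of $\{1,\ldots,n\}$ exceed $n-1$, which is your interior-valley contradiction, and the adjacency of $n$ to $n-1$ at an endpoint follows just as you argue. You merely make explicit what the paper leaves implicit (the peak/valley dictionary, the injectivity clash $\s(i-1)=\s(i+1)=n$, and the parity bookkeeping for which endpoints lie in the lower row), which is a faithful elaboration rather than a different route.
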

%Examples for $n=6, 7$ are the following:
%\[
%\s=
%\begin{matrix}
%\fb{5}&&\fb{7}&&\fb{\ul{\,6\,}}&&\fb{4}\st\\
%&\fbox{3}&&\fb{1}&&\fb{2}&
%\end{matrix}
%\]
\noindent For $n$ even:
\[
\s=
\begin{matrix}
&\fb{\phx}&\\
\fbox{{$n-1$}}&&\fb{\phx}&
\end{matrix}
\cd\,\,
\begin{matrix}
&\fb{\phx}  \\
\fb{\phx}&	
\end{matrix}
\,\,\then\,\,
\begin{matrix}
&\fb{$n$}&&\\
\fbox{{$n-1$}}&&\fb{\phx}&
\end{matrix}
\cd\,\,
\begin{matrix}
&\fb{\phx}  \\
\fb{\phx}&	
\end{matrix}
\]
For $n$ odd:
\[
\resizebox{\hsize}{!}{
$\s=
\begin{matrix}
&\fb{\phx}&&\fb{\phx}&\\
\fbox{{$n-1$}}&&\fb{\phx}
\end{matrix}
\cd\,\,
\begin{matrix}
\fb{\phx}&\\	
&\fb{\phx}  
\end{matrix}
\,\,\then\,\,
\begin{matrix}
&\fb{$n$}&&\\
\fbox{{$n-1$}}&&\fb{\phx}& 
\end{matrix}
\cd\,\,
\begin{matrix}
\fb{\phx}&\\	
&\fb{\phx}  
\end{matrix}$
}
\]
\[
\resizebox{\hsize}{!}{
$\s=
\begin{matrix}
&\fb{\phx}&&\fb{\phx}&\\
\fbox{\phx}&&\fb{\phx}&
\end{matrix}
\cd\,\,
\begin{matrix}
\fb{\phx}&\\	
&\fb{$n-1$}  
\end{matrix}
\,\,\then \,\,
\begin{matrix}
&\fb{\phx}&&\fb{\phx}&\\
\fbox{\phx}&&\fb{\phx}&
\end{matrix}
\cd\,\,
\begin{matrix}
\fb{$n$}&\\	
&\fb{$n-1$}  
\end{matrix}$
}
\]

\begin{defn}
Let $\s$ be an alternating permutation in $S_{n}$.
Say $\s$ is \eh{2nd-max-upper}
if $n-1$ appears in its upper row; 
$\s$ is \eh{2nd-max-lower} if $n-1$ appears in its lower row.
Now for $n\ge 2$, define
\begin{align*}
	\eup_{n}&=|\{\te{up-down 2nd-max-upper permutations in $S_{n}$}\}|,
	\\\edown_{n}&=|\{\te{up-down 2nd-max-lower permutations in $S_{n}$}\}|,
%	\\D_{n}^{+}&=\te{the number of down-up 2nd-max-upper permutations in $S_{n}$},
%	\\D_{n}^{-}&=\te{the number of down-up 2nd-max-lower permutations in $S_{n}$},
\end{align*}
\end{defn}
These numbers give another refinement of $E_{n}$:
\[
E_{n}=\eup_{n}+\edown_{n}.
%E_{n}=D_{n}^{+}+D_{n}^{-}.
\]

\begin{ob}
\label{ob3}
Up to positions of $n-1$ and $n$, there are two kinds of 2nd-max-upper permutations:
\begin{align*}
	\s&=
	\begin{matrix}
		&   \fb{\ph{X}}   \\
		\fb{\ph{X}}&      
	\end{matrix}
	\,\,\cd\,\,
	\begin{matrix}
		&   \fb{$n-1$}   \\
		\fb{\ph{X}}&     
	\end{matrix}
	\,\,\cd\,\,
	\begin{matrix}
		&   \fb{$n$}   \\
		\fb{\ph{X}}&     
	\end{matrix}
\,\,\cd\,\,
	\vi\s&=
	\begin{matrix}
		&   \fb{\ph{X}}   \\
		\fb{\ph{X}}&      
	\end{matrix}
\,\,\cd\,\,
	\begin{matrix}
		&   \fb{$n$}   \\
		\fb{\ph{X}}&     
	\end{matrix}
\,\,\cd\,\,
	\begin{matrix}
		&   \fb{$n-1$}   \\
		\fb{\ph{X}}&     
	\end{matrix}
\,\,\cd\,\,
\end{align*}
Via the transposition $n-1\leftrightarrow n$, we see that 
numbers of those two kinds of permutations are equal.
Hence, $\eup_{n}$ are all even.
%$U_{n}^{-}$ is even 
%if and only if $n$ is even.
\end{ob}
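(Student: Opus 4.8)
The plan is to realize the transposition $n-1\leftrightarrow n$ named in the statement as an explicit fixed-point-free involution on the set of up-down 2nd-max-upper permutations, and to check that it interchanges the two kinds exhibited above. Concretely, I would define $\varphi(\sigma)=\tau\circ\sigma$, where $\tau=(n-1\ n)$ is the transposition of the two \emph{values} $n-1$ and $n$. In the boxed two-row picture this simply swaps the contents of the two boxes that hold $n-1$ and $n$, leaving every position fixed; equivalently, $(\tau\circ\sigma)(i)$ sends a position carrying $n-1$ to one carrying $n$ and vice versa, and fixes all other positions.

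First I would verify that $\varphi$ maps the set of up-down 2nd-max-upper permutations into itself. By Observation~\ref{ob1} the value $n$ always occupies the upper row, and by the 2nd-max-upper hypothesis $n-1$ occupies the upper row as well; hence both boxes touched by $\tau$ sit at peak positions $2i$, where the local pattern reads $\sigma(2i-1)<\sigma(2i)>\sigma(2i+1)$. Since $n-1$ and $n$ are the two largest values and both lie in the upper row, every lower-row entry is at most $n-2$. Therefore, after the swap the two altered upper-row entries (now $n-1$ where $n$ stood, and $n$ where $n-1$ stood) still strictly exceed both of their lower-row neighbors, so the up-down pattern is preserved and $\varphi(\sigma)$ is again alternating. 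As $n-1$ remains in the upper row, $\varphi(\sigma)$ stays 2nd-max-upper, and $\varphi$ is an involution because $\tau^{2}=\mathrm{id}$.

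Next I would observe that $\varphi$ exchanges the two kinds and has no fixed points. In a permutation of the first kind $n-1$ lies to the left of $n$; after applying $\varphi$ the box formerly holding $n-1$ holds $n$ and conversely, so now $n$ lies to the left of $n-1$, which is precisely the second kind, and symmetrically. Since both $n-1$ and $n$ are forced into the upper row, every 2nd-max-upper permutation falls into exactly one of the two kinds, so $\varphi$ restricts to a bijection from the first kind onto the second. Because $n-1\neq n$, we have $\varphi(\sigma)\neq\sigma$ always, confirming that $\varphi$ is fixed-point-free. Consequently the two kinds are equinumerous, and $\eup_{n}$, being twice the number of permutations of the first kind, is even.

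The only step demanding genuine care is the preservation of the up-down pattern under the swap, which I expect to be the main (if modest) obstacle. Everything hinges on the fact that \emph{both} $n-1$ and $n$ occupy upper-row peak positions, forcing all of their lower-row neighbors to be bounded by $n-2$; once this bound is made explicit the peak inequalities become automatic, and the remaining bookkeeping—that $\varphi$ is an involution exchanging the two kinds—is routine.
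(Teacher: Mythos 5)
Your proposal is correct and takes essentially the same approach as the paper: Observation~\ref{ob3} rests on exactly this value-swap $n-1\leftrightarrow n$, which pairs the two kinds of 2nd-max-upper permutations and forces $E^{\uparrow}_{n}$ to be even. Your explicit verification that the swap preserves the up-down pattern (both values occupy upper-row peaks, so every lower-row neighbor is at most $n-2$) merely spells out the detail the paper leaves implicit.
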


%\begin{ex}
%$E_{5}=16$.
%$n=6$
%$E_{6}=51$.
%among these,
%\[
%\s=
%\du{}{}\du{}{5}\du{}{6}
%\]
%\[
%\s=
%\du{}{}\du{}{6}\du{}{5}
%\]
%so 
%\[
%E_{6}=U_{6}^{+}+U_{6}^{-}=51.\]
%%\[E_{6}=D_{6}^{+}+D_{6}^{-}=51.\]
%\end{ex}
\begin{lem}
We have $\eup_{2}=0$ and 
%\begin{lem}\label{un+}
%
\[
\eup_{n}=
2\cdot (n-2)!
\sum_{
\substack{i, j, k\ge 0\\
(2i+1)+(2j+1)+k=n-2}
}\ff{E_{2i+1}}{(2i+1)!}
\ff{E_{2j+1}}{(2j+1)!}
\ff{E_{k}}{k!}.
\]
for $n\ge 3$.
\end{lem}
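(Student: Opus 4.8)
The plan is to give a direct block-decomposition (bijective) argument that produces the convolution on the right-hand side; this also makes transparent why $\eup_n$ is even, as already recorded in Observation~\ref{ob3}. First I would dispose of the factor $2$. By Observation~\ref{ob3}, transposing the entries $n-1$ and $n$ is an involution on the set of up-down 2nd-max-upper permutations that interchanges the two kinds — those in which $n-1$ precedes $n$ and those in which $n$ precedes $n-1$ — while preserving all other structure. Hence it suffices to enumerate the permutations of the first kind and multiply by $2$. So from now on assume $n-1$ sits at some peak position $2p$ and $n$ at a later peak position $2q$ with $p<q$.

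Next I would cut $\s$ at the two large entries into three blocks: the left block $L$ on positions $1,\ds,2p-1$, the middle block $M$ on the positions strictly between $n-1$ and $n$, namely $2p+1,\ds,2q-1$, and the right block $R$ on positions $2q+1,\ds,n$. Reading off lengths gives $|L|=2p-1$, $|M|=2(q-p)-1$ and $|R|=n-2q$, so that $|L|+|M|+|R|=n-2$; here $|L|$ and $|M|$ are odd and positive while $|R|\ge 0$ is arbitrary, its parity forced to agree with that of $n$. The key point, which I would verify carefully, is that these three blocks are \emph{completely unconstrained} alternating permutations of their own value sets: $L$ and $M$ are up-down of odd length (hence counted by $E_{|L|}$ and $E_{|M|}$) and $R$ is up-down of arbitrary length (counted by $E_{|R|}$). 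Indeed, every inequality gluing a block to a neighbouring large entry — for instance $\s(2p-1)<n-1$ at the right end of $L$, or $\s(2q+1)<n$ at the left end of $R$ — holds automatically, since $n-1$ and $n$ are the two largest values in $\{1,\ds,n\}$; thus the alternating condition on $\s$ is equivalent to the alternating condition on each block separately.

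I would then count. With block sizes $(a,b,c)=(|L|,|M|,|R|)$ fixed and $a+b+c=n-2$, reconstructing $\s$ amounts to (i) distributing the $n-2$ values other than $n-1,n$ among the three blocks, in $\binom{n-2}{a,b,c}=\ff{(n-2)!}{a!\,b!\,c!}$ ways, and (ii) choosing an up-down pattern inside each block, in $E_aE_bE_c$ ways; since $a,b,c$ already determine the positions of $n-1$ and $n$, this is a bijection. Summing $\ff{(n-2)!}{a!\,b!\,c!}E_aE_bE_c=(n-2)!\,\ff{E_a}{a!}\ff{E_b}{b!}\ff{E_c}{c!}$ over all admissible $(a,b,c)$, setting $a=2i+1$, $b=2j+1$, $c=k$, and restoring the factor $2$ yields exactly the claimed formula. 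For $n=2$ the index set is empty, because $a,b\ge 1$ force $a+b+c\ge 2>0=n-2$, so $\eup_2=0$.

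The main obstacle I anticipate is the decoupling in the second paragraph: rigorously arguing that no order relation survives across the two cuts, so that the blocks vary independently and the plain product $E_aE_bE_c$ times the multinomial is correct, together with checking the edge cases ($|R|=0$ when $n$ is even, $|R|\ge 1$ when $n$ is odd) so that the admissible triples $(a,b,c)$ match the summation range $(2i+1)+(2j+1)+k=n-2$ precisely. Everything else — the multinomial bookkeeping, the substitution, and the base case — is routine once the decomposition is confirmed to be a genuine bijection.
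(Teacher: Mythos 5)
Your proof is correct and is essentially the paper's own argument: the same three-block decomposition of $\s$ around the peaks holding $n-1$ and $n$, the same count (your multinomial $\binom{n-2}{a,\,b,\,c}E_aE_bE_c$ is exactly the paper's product $\binom{n-2}{2i+1}E_{2i+1}\binom{(n-2)-(2i+1)}{2j+1}E_{2j+1}E_k$), and the same factor of $2$ via the transposition $n-1\leftrightarrow n$ from Observation \ref{ob3}. Your extra care --- verifying that the boundary inequalities hold automatically because $n-1$ and $n$ are the two largest values, and noting the empty index set for $n=2$ --- only makes explicit what the paper leaves implicit.
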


\begin{proof}
Suppose $\s$ is an up-down 2nd-max-upper permutation.
Suppose further, for the moment, $n-1$ appears to the left of $n$. 
Let $(i, j, k)$ be a triple of nonnegative integers such that 
$(2i+1)+(2j+1)+k=n-2$. Now, $\s$ looks like this:
%How to enumerate $U_{n}^{+}$. we find recurrence.
%as follows:
\[
\s=
\underbrace{
\begin{matrix}
	&\fb{\phx}\\
	\fb{\phx}&
\end{matrix}
\cd
\begin{matrix}
	&\fb{\phx}\\
	\fb{\phx}&&\fb{\phx}
\end{matrix}
}_{2i+1}
\begin{matrix}
	\fbox{$n-1$}  \\
\mb{}
\end{matrix}
\underbrace{
\begin{matrix}
	&  \fbox{\ph{X}}\\
	\fbox{\ph{X}}&
\end{matrix}
\cd
\begin{matrix}
	&  \fbox{\ph{X}}\\
	\fbox{\ph{X}}&&\fb{\phx}
\end{matrix}
}_{2j+1}
\begin{matrix}
	\fbox{$n$}\\
	\mb{}
\end{matrix}
\underbrace{\begin{matrix}
	&\fbox{\ph{X}}      \\
	\fbox{\ph{X}}&
\end{matrix}
\cd}_{k}
\]
Each part is alternating itself. For the first part, there are 
$\binom{n-2}{2i+1}E_{2i+1}$ choices.
Similarly, for the second part, there are 
$\binom{(n-2)-(2i+1)}{2j+1}E_{2j+1}$ choices, 
and for the third, $E_{k}$.
Altogether, we have 
\[
\sum_{
\substack{i, j, k\ge 0\\
(2i+1)+(2j+1)+k=n-2}
}
\binom{n-2}{2i+1}E_{2i+1}
\binom{(n-2)-(2i+1)}{2j+1}E_{2j+1}
E_{k}\]
\[
=
(n-2)!
\sum_{
\substack{i, j, k\ge 0\\
(2i+1)+(2j+1)+k=n-2}
}\ff{E_{2i+1}}{(2i+1)!}
\ff{E_{2j+1}}{(2j+1)!}
\ff{E_{k}}{k!}.
\]
Taking $n-1 \leftrightarrow n$ into account, $\eup_{n}$
 is equal to the double of this.
\end{proof}

\begin{ex}
Let $n=8$.
All triples $(i, j, k)$ satisfying $(2i+1)+(2j+1)+k=n-2=6$ 
are 
\[
(1, 1, 4), (1, 3, 2), (1, 5, 0), (3, 1, 2), (3, 3, 0), (5, 1, 0).
\]
Therefore, we have 
\begin{align*}
	\eup_{8}&=
	2\ti 6!
	\k{\ff{E_{1}}{1!}\ff{E_1}{1!}\ff{E_{4}}{4!}+
	\ff{E_{1}}{1!}\ff{E_3}{3!}\ff{E_{2}}{2!}+
	\ff{E_{1}}{1!}\ff{E_5}{5!}\ff{E_{0}}{0!}+
	\ff{E_{3}}{3!}\ff{E_1}{1!}\ff{E_{2}}{2!}+
	\ff{E_{3}}{3!}\ff{E_3}{3!}\ff{E_{0}}{0!}+
	\ff{E_{5}}{5!}\ff{E_1}{1!}\ff{E_{0}}{0!}}
	\\&=2(150+120+120+96+80+96)
	\\&=1324.
\end{align*}
This does not seem so elegant at a glance. 
However, we will see in Section 5 that $\{\eup_{n}\}$ has the nice expression 
in terms of formal power series with $\secx$ and $\tanx$.
\end{ex}
\begin{lem}\label{a1}
For $k\ge 1$, we have 
%$U_{n}^{-}=E_{n-2}$ for $n\ge 2$.
\[
\begin{cases}
	\edown_{2k}=E_{2k-2},\\
	\edown_{2k+1}=2E_{2k-1}.
\end{cases}	
\]
\end{lem}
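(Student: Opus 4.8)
The plan is to invoke Observation~\ref{ob2} to force the locations of $n-1$ and $n$, and then to recognize the remaining entries as a free (smaller) up-down arrangement. Throughout, $\s$ denotes an up-down permutation in $S_n$, so its lower row consists of the odd positions; a 2nd-max-lower permutation is one with $n-1$ in the lower row, and by Observation~\ref{ob2} this forces $n-1$ into an extremal lower position with $n$ immediately adjacent.

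First I would handle the even case $n=2k$. Here the sole extremal lower position is the first, so $\s(1)=n-1$ and, by adjacency, $\s(2)=n$; the required relation $\s(1)<\s(2)$ holds. Deleting these two entries leaves positions $3,4,\ldots,2k$ to be filled by $\{1,\ldots,2k-2\}$. Since $\s(2)=n$ is the global maximum, the constraint $\s(2)>\s(3)$ is automatic, and the residual pattern $\s(3)<\s(4)>\cdots$ is precisely an up-down pattern of length $2k-2$. Hence these arrangements are in bijection with the up-down permutations of a $(2k-2)$-element set, giving $\edown_{2k}=E_{2k-2}$ (the base case $k=1$ being covered by the convention $E_0=1$).

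For the odd case $n=2k+1$, Observation~\ref{ob2} permits $n-1$ at either extremal lower position---position $1$ or position $n$---producing two disjoint subcases. If $n-1$ occupies position $1$, then $n$ occupies position $2$, and the tail on positions $3,\ldots,2k+1$ is a free up-down arrangement of length $2k-1$, counted by $E_{2k-1}$. If instead $n-1$ occupies the last position $2k+1$, then $n$ must sit at position $2k$; one verifies the end relation $\s(2k)>\s(2k+1)$, and the initial block on positions $1,\ldots,2k-1$ is again a free up-down arrangement of length $2k-1$, counted by $E_{2k-1}$. Summing the two subcases yields $\edown_{2k+1}=2E_{2k-1}$.

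The one point I would write out with care---and the only genuine content beyond bookkeeping on positions and parities---is the claim that after fixing $n-1$ and $n$ the surviving entries form an \emph{unconstrained} up-down permutation on a contiguous block of positions. Two things must be checked: that adjacency to the maximum $n$ imposes no hidden condition on the neighboring residual entry (it does not, since $n$ exceeds all surviving values), and that the residual block opens with the correct up/down relation, so that, after the order isomorphism rescaling values to $\{1,\ldots,m\}$, the surviving arrangements biject with the up-down permutations counted by $E_{2k-2}$ (respectively $E_{2k-1}$).
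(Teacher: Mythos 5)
Your proposal is correct and is essentially the paper's own proof: the paper likewise uses Observation \ref{ob2} to pin $n-1$ (with $n$ adjacent) at the unique extremal lower position when $n=2k$, and at either of the two extremal lower positions when $n=2k+1$, with the remaining entries forming a free up-down permutation counted by $E_{2k-2}$ or $E_{2k-1}$ respectively. The only difference is presentational: the paper argues by boxed two-row pictures, while you spell out explicitly the check that adjacency to the maximum $n$ imposes no constraint on the residual block and that the block opens with the correct relation, which the paper leaves implicit.
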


\begin{proof}
Suppose $\s$ is an up-down 2nd-max-lower permutation in $S_{n}$.
If $n=2k$, then $n-1$ must appear at the first position together with $n$  right after it:
\[
\s=
\begin{matrix}
	&   \fb{$n$}&   &\fb{\phx}\\
	\fb{$n-1$}&   &\fb{\phx}   
\end{matrix}
%\ud{$n-1$}{$n$}
%\ud{\ph{X}}{\ph{X}}
\,\,\cd\,\,
\begin{matrix}
 &\fb{\phx}\\
	\fb{\phx}&\\
\end{matrix}
\]
Thus, $\edown_{2k}=E_{2k-2}$.
If $n=2k+1$, then $n-1$ must appear at the first position likewise 
or at the last right after $n$:
\[
\s=
\begin{matrix}
	&   \fb{$n$}&   &\fb{\phx}\\
	\fb{$n-1$}&   &\fb{\phx}   
\end{matrix}
%\ud{$n-1$}{$n$}
%\ud{\ph{X}}{\ph{X}}
\,\,\cd\,\,
\begin{matrix}
 &\fb{\phx}\\
	\fb{\phx}&\\
\end{matrix}
\]
or
\[
\s=
\begin{matrix}
	&   \fb{\phx}&   &\fb{\phx}\\
	\fb{\phx}&   &\fb{\phx}   
\end{matrix}
%\ud{$n-1$}{$n$}
%\ud{\ph{X}}{\ph{X}}
\,\,\cd\,\,
\begin{matrix}
 &\fb{$n$}\\
\fb{\phx}&	&\fb{$n-1$}&\\
\end{matrix}
\]
Thus, $\edown_{2k+1}=2E_{2k-1}$.
\end{proof}

%how to count $D_{n}^{+}$

{\renewcommand{\arraystretch}{1.75}
\begin{table}
\caption{Another refinement of Euler numbers}
\begin{center}
	\begin{tabular}{c|cccccccccccc}\h
$n$	&	0&	1&2&3&4&5&6&7&8&9&$\cd$\\\h
%$2E_{n}$	&2	&2&2&4&10&32&122&544&2770&\\\h
$E_{n}$	&1&	1&1&2&5&16&61&272&1385&7936&$\cd$	\\\h\h\h
$\ena_{n}$	&---	&---	&1&1&3&8&33&136&723&3968&$\cd$\\\h
$\enb_{n}$	&---	&---	&0&1&2&8&28&136&662&3968&$\cd$\\\h\h\h
$\eup_{n}$	&---	&---&0&0&4&12&56&240&1324&7392&$\cd$	\\\h
$\edown_{n}$	&---	&---&1&2&1&4&5&32&61&544&$\cd$	\\\h\h
%$D_{n}^{+}$	&	&	&0&2&4&16&56&272&1324&\\\h
%$D_{n}^{-}$	&	&	&1&0&1&0&5&0&61&\\\h
\end{tabular}
\end{center}
\end{table}}

{\renewcommand{\arraystretch}{1.75}
\begin{table}
\caption{$E^{\up}_{4}=4, E^{\down}_{4}=1, E^{\up}_{5}=12, E^{\down}_{5}=4.$}
\begin{center}
\mb{}\hf	\begin{tabular}{|c|c|c|ccccc}\h
$n=4$	&	2nd-max-upper&2nd-max-lower	\\\h
\multirow{4}{*}{up-down}&	1324&	\multirow{4}{*}{3412}\\
&	1423&	\\
&	2314&	\\
&	2413&	\\\h
\end{tabular}
\hf\mb{}
\viii
\mb{}\hf\begin{tabular}{|c|c|c|ccccc}\h
$n=5$	&	2nd-max-upper&2nd-max-lower	\\\h
up-down&
$\begin{matrix}
	14253&24153   &34152   \\
	14352&24351   &34251\\
	15243&25143   &35142\\
	15342&25341   &35241\\
\end{matrix}$
	&	
$\begin{matrix}
13254	\\
23154	\\
45132	\\
45231	\\
\end{matrix}$
\\\h	
\end{tabular}
\hf\mb{}
\end{center}
\end{table}

}

\section{Theorem}%:\UTF{2022}%

\begin{thm}
%\[
%E_{2n+1}^{\nearrow}-E_{2n+1}^{\nwarrow}=0, \q 
%\]
\[
E_{2n}^{\nearrow}-E_{2n}^{\nwarrow}=E_{2n-2} \q (n\ge 1)
\]\end{thm}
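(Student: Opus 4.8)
The plan is to use the \emph{new} refinement $E_{2n}=\eup_{2n}+\edown_{2n}$ as a bridge to the old one $E_{2n}=E_{2n}^{\nearrow}+E_{2n}^{\nwarrow}$, so that the theorem collapses to a single statement about max-min permutations. Indeed, substituting $E_{2n}^{\nearrow}=E_{2n}-E_{2n}^{\nwarrow}$ into the left-hand side and $\edown_{2n}=E_{2n}-\eup_{2n}$ into Lemma 2, one checks that the target identity $E_{2n}^{\nearrow}-E_{2n}^{\nwarrow}=\edown_{2n}$ is \emph{equivalent} to
\[
\eup_{2n}=2\,E_{2n}^{\nwarrow}.
\]
Once this is established, the lemma computing $\edown_{2n}=E_{2n-2}$ closes the argument immediately, since $E_{2n}^{\nearrow}-E_{2n}^{\nwarrow}=E_{2n}-2E_{2n}^{\nwarrow}=E_{2n}-\eup_{2n}=\edown_{2n}=E_{2n-2}$.

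First I would convert the lemma giving the formula for $\eup_{n}$ into a generating-function statement: multiplying by $x^{n-2}/(n-2)!$ and summing over $n\ge 2$ turns the triple convolution into a product, namely
\[
\dsum_{n\ge 2}\eup_{n}\ff{x^{n-2}}{(n-2)!}=2\,\tan^{2}x\,(\sec x+\tan x),
\]
using that $\sum_{i\ge 0}E_{2i+1}x^{2i+1}/(2i+1)!=\tan x$ and $\sum_{k\ge 0}E_{k}x^{k}/k!=\sec x+\tan x$. Alongside this I place the Heneghan--Petersen formula $\enw(x)=\sec x\,\tan x\,(\sec x+\tan x)$, which is available from the excerpt. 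The key step is a parity comparison: because $\tan x$ is odd and $\sec x$ even, the even part of $2\tan^{2}x(\sec x+\tan x)$ is $2\sec x\tan^{2}x$, while the even part of $\sec x\,\tan x(\sec x+\tan x)$ is $\sec x\tan^{2}x$. These agree up to the factor $2$, so reading off the coefficient of $x^{2n-2}/(2n-2)!$ yields exactly $\eup_{2n}=2E_{2n}^{\nwarrow}$. (Note the odd parts do \emph{not} match, consistent with the fact that $\eup_{2k+1}\ne 2E_{2k+1}^{\nwarrow}$; only the even case is needed, which is precisely what the parity split supplies.)

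The main obstacle is the identity $\eup_{2n}=2E_{2n}^{\nwarrow}$ itself. The generating-function derivation above is routine, but since the whole purpose is to answer Heneghan--Petersen's request for a \emph{bijective} explanation, the substantive task is to realize this equality by an explicit map rather than by coefficient extraction. By Observation \ref{ob3} the $\eup_{2n}$ permutations split into two equinumerous families according to whether $n-1$ lies to the left or to the right of $n$; hence a fully combinatorial proof would exhibit a bijection between up-down max-min permutations of degree $2n$ and those $2$nd-max-upper permutations in which $n-1$ precedes $n$. Building that bijection---and checking that rearranging the blocks flanking $n-1$ and $n$ preserves the up-down pattern and the relative order of $n$ and $1$---is the delicate part; the generating-function identity then serves as an a priori guarantee that such a bijection exists and as a check on the final count.
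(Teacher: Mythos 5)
Your argument is correct, and its outer skeleton coincides with the paper's: both reduce the theorem to the pair of facts $\eup_{2n}=2\enw_{2n}$ and $\edown_{2n}=E_{2n-2}$, and then run the chain $\ene_{2n}-\enw_{2n}=E_{2n}-2\enw_{2n}=E_{2n}-\eup_{2n}=\edown_{2n}=E_{2n-2}$. The genuine difference is how the key identity $\eup_{2n}=2\enw_{2n}$ is established. You extract it analytically: you turn the convolution for $\eup_{n}$ into $E^{\uparrow}(x)=2\tan^{2}x\,(\secx+\tx)$, import Heneghan--Petersen's formula $\enw(x)=\secx\tx(\secx+\tx)$, and compare even parts ($2\secx\tan^{2}x$ against $\secx\tan^{2}x$). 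The paper's proof of Lemma \ref{l1} instead avoids citing the formula for $\enw(x)$: it re-derives, by splitting a max-min permutation of degree $2n$ at the entries $2n$ and $1$ into blocks of lengths $2i+1$, $2j$, $2k+1$, the convolution
\[
\enw_{2n}=(2n-2)!\sum_{\substack{i,j,k\ge 0\\ (2i+1)+2j+(2k+1)=2n-2}}\ff{E_{2i+1}}{(2i+1)!}\,\ff{E_{2j}}{(2j)!}\,\ff{E_{2k+1}}{(2k+1)!},
\]
and observes that this agrees term by term with the convolution for $\eup_{2n}/2$ (with $n$ replaced by $2n$, parity forces the third index in the $\eup$ formula to be even, and one relabels the second and third blocks). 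That term-by-term matching of block decompositions, combined with the $n-1\leftrightarrow n$ swap of Observation \ref{ob3}, is exactly the explicit correspondence whose existence you only infer; the ``delicate part'' you defer in your final paragraph is precisely what the paper's proof supplies, and it is the whole point, since the identity itself was already proved analytically by Heneghan--Petersen and the paper's contribution is the combinatorial explanation. Indeed, your even-part computation appears almost verbatim in the paper's ``Backstage'' section as the motivating calculation behind the combinatorial proof. What your route buys is brevity and independence from the block analysis of max-min permutations; what it costs is the bijective content, plus a dependence on the quoted power-series formula for $\enw(x)$ that the paper's self-contained count of $\enw_{2n}$ does not need.
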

%\begin{proof}
%For convenience, we repeat their proof.
%\end{proof}
Although Heneghan-Petersen proved this using power series, 
here we give a bijective explanation. Let us put it in this way:
\[
E_{2n}^{\nearrow}=E_{2n}^{\nwarrow}+E_{2n-2}.
\]
\begin{lem}\label{l1}
\[
\eup_{2n}=2\enw_{2n} \text{\q for $n\ge 1$}.
\]
\end{lem}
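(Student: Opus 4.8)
The plan is to prove this bijectively, so that it fits the bijective spirit of the Theorem. First I would invoke Observation \ref{ob3}: transposing the two largest values $2n-1$ and $2n$ shows $\eup_{2n}=2\,|A|$, where $A$ is the set of up-down $2$nd-max-upper permutations of degree $2n$ in which $2n-1$ stands to the \emph{left} of $2n$. Since the up-down max-min permutations of degree $2n$ number exactly $\enw_{2n}$, it then suffices to build a bijection $\Phi\colon A\to\{\text{up-down max-min permutations of degree }2n\}$.

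For the source side I would reuse the block decomposition already behind the formula for $\eup_{n}$: every $\sigma\in A$ is uniquely
\[
\sigma=B_{1}\,\fbox{$2n-1$}\,B_{2}\,\fbox{$2n$}\,B_{3},
\]
where $B_{1},B_{2}$ are up-down blocks of \emph{odd} degree and $B_{3}$ is an up-down block of \emph{even} degree, on a distribution of the values $\{1,\dots,2n-2\}$. For the target side I would establish the parallel normal form: cutting an up-down max-min permutation at the (even) position of $2n$ and the (odd) position of $1$ yields a unique
\[
\tau=C_{1}\,\fbox{$2n$}\,C_{2}\,\fbox{$1$}\,C_{3},
\]
where $C_{1}$ is up-down of odd degree, $C_{2}$ is up-down of even degree, and $C_{3}$ is \emph{down-up} of odd degree. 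The key point is the parity bookkeeping: since $2n$ sits at an even position and $1$ at an odd one with $2n$ before $1$, the three gaps have odd, even, odd lengths, and the trailing gap begins with a descent, so $C_{3}$ is down-up rather than up-down.

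With both normal forms in hand the bijection is transparent: set $\Phi(\sigma)=\tau$ with
\[
C_{1}=B_{1},\qquad C_{2}=B_{3},\qquad C_{3}=\overline{B_{2}},
\]
where $\overline{B_{2}}$ is the order-complement of $B_{2}$ (replace each entry $v$ by $\min+\max-v$ on that block's value set, turning an up-down odd block into a down-up odd block of the same degree), after relabeling the small values by the unique order-preserving map $\{1,\dots,2n-2\}\to\{2,\dots,2n-1\}$. Because an alternating block depends only on the relative order of its entries, and because the global minimum $1$ and maximum $2n$ automatically meet the required inequalities at the junctions, $\Phi$ is a well-defined bijection. Equivalently, both sides reduce to the same symmetric multinomial sum $\sum\binom{2n-2}{a,b,c}E_{a}E_{b}E_{c}$ over one even and two odd parts, giving $|A|=\enw_{2n}$ and hence $\eup_{2n}=2\enw_{2n}$.

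I expect the main obstacle to be the target-side decomposition rather than the map itself: one must verify that reading off the blocks of an up-down max-min permutation produces exactly (odd up-down, even up-down, odd down-up) for \emph{every} admissible pair of positions of $2n$ and $1$, that each of the three glued junctions carries the correct ascent or descent, and that the construction is exhaustive and invertible. Once this parity analysis is nailed down, the equality of the two multinomial sums and the bijectivity of $\Phi$ are immediate. As a sanity check, in degree $4$ the map sends $1324\mapsto 2413$ and $2314\mapsto 3412$, which are precisely the two up-down max-min permutations.
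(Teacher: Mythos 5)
Your proposal is correct and takes essentially the same route as the paper: the paper's proof rests on exactly your two three-block decompositions (odd up-down, odd up-down, even up-down around $2n-1$ and $2n$, versus odd up-down, even up-down, odd down-up around $2n$ and $1$) and concludes by observing that both enumerations yield the identical multinomial sum $(2n-2)!\sum E_{a}E_{b}E_{c}/(a!\,b!\,c!)$ over one even and two odd parts, with the factor $2$ supplied by Observation \ref{ob3}. The only difference is that you upgrade this coincidence of counts to the explicit block-matching bijection $\Phi$ (with order-complement on the middle block), which the paper leaves implicit; your parity bookkeeping at the junctions and the degree-$4$ check ($1324\mapsto 2413$, $2314\mapsto 3412$) are accurate.
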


\begin{proof}
We first confirm that $\eup_{2}=0$ and 
\[
\enw_{2n}=(2n-2)!
\sum_{
\substack{i, j, k\ge 0\\
(2i+1)+2j+(2k+1)=2n-2}
}\ff{E_{2i+1}}{(2i+1)!}
\ff{E_{2j}}{(2j)!}
\ff{E_{2k+1}}{(2k+1)!}.
\]
Suppose $\s$ is a max-min permutation of degree $2n$.
It splits into three parts:
\[
\resizebox{\hsize}{!}{$
\s=
\underbrace{
\begin{matrix}
	&\fb{\phx}\\
	\fb{\phx}&
\end{matrix}
\,\,\cd\,\,
\begin{matrix}
	&\fb{\phx}\\
	\fb{\phx}&&\fb{\phx}
\end{matrix}
}_{2i+1}
\begin{matrix}
	\fbox{$2n$}  \\
\mb{}
\end{matrix}
\underbrace{
\begin{matrix}
	&  \fbox{\ph{X}}\\
	\fbox{\ph{X}}&
\end{matrix}
\,\,\cd\,\,
\begin{matrix}
	&  \fbox{\ph{X}}\\
	\fbox{\ph{X}}&
\end{matrix}
}_{2j}
\begin{matrix}
	\mb{}\\
	\fbox{$1$}
\end{matrix}
\underbrace{\begin{matrix}
	\fbox{\ph{X}}&      \\
	&\fbox{\ph{X}}
\end{matrix}
\,\,\cd\,\,
\begin{matrix}
	 \\
	\fbox{\ph{X}}
\end{matrix}
}_{2k+1}
$}\]
For the first part, there are 
$\binom{2n-2}{2i+1}E_{2i+1}$ choices, 
for the second part, $\binom{(2n-2)-(2i+1)}{2j}E_{2j}$ choices, 
and for the third part, $E_{2k+1}$.
Notice that this enumeration is quite similar to the one for $\eup_{n}$ above; that is, replacing $n$ by $2n$ and $k$ by $2k$ and interchanging $j$ and $k$, 
we see that $\eup_{2n}$ and $2\enw_{2n}$ satisfy exactly the same recurrence with $\eup_{2}=0=2\enw_{2}$. Thus, 
\[
\eup_{2n}=2\enw_{2n} \text{\q for $n\ge 1$}.
\]
It follows from this and Lemma \ref{a1} that 
\[
E_{2n}=\ene_{2n}+\enw_{2n}
=\k{\enw_{2n}+E_{2n-2}}+\enw_{2n}=2\enw_{2n}+E_{2n-2}
=\eup_{2n}+\edown_{2n}.
\]
%We come back
%%Now it is easy to give a bijective interpretation of this equality.
%Let $\s$ be an up-down permutation of $S_{2n}$; 
%the number of such permutations is $E_{2n}$.
%It is either 2nd-max-upper or 2nd-max-lower.
%First, by definition, $E_{2n}^{\up}$ counts 2nd-max-upper such permutations.
%Second, 
%$E_{2n-2}$ counts 2nd-max-lower such permutations since 
%if $2n-1$ appears in the lower row,
%then $\s(1)=2n-1$ (otherwise a contradiction).
%It forces further $2n-1=\s(1)<\s(2)=2n$:
%\[
%\s=
%\begin{matrix}
%	&   \fb{\phx}&   \\
%	\fb{$2n-1$}&   &   
%\end{matrix}
%\cd\,\,
%\begin{matrix}
%	&   \fb{\phx}   \\
%	\fb{\phx}&     
%\end{matrix}
%\,\,\then
%\,\,\s=
%\begin{matrix}
%	&   \fb{$2n$}&   \\
%	\fb{$2n-1$}&   &   
%\end{matrix}
%\underbrace{\cd\,\,
%\begin{matrix}
%	&   \fb{\phx}.   \\
%	\fb{\phx}&     
%\end{matrix}}_{E_{2n-2}}
%\]
Hence 
$E_{2n}^{\nearrow}-E_{2n}^{\nwarrow}=E_{2n-2}$ 
 was equivalent to ths idea of our refinement for $E_{2n}$. 
 All the essence is in Observations \ref{ob1}, \ref{ob2} and \ref{ob3}. Very simple!
\end{proof}

\section{Backstage}%:\UTF{2022}%

Let us understand our result in terms of power series. 
Since $\ene_{n}, \enw_{n}$ make sense for only $n\ge 2$, 
Heneghan-Petersen introduced the power series 
\[
\ene(x)=
\dsum_{n=0}^{\infty}{E_{n+2}^{\nearrow}}\ff{x^{n}}{n!}
, \q 
\enw(x)=
\dsum_{n=0}^{\infty}{E_{n+2}^{\nwarrow}}\ff{x^{n}}{n!}
\]
with two steps shifted. They proved that 
\[
\ene(x)=\sec^{2}x(\sx+\tx), \q 
\enw(x)=\secx\tx(\sx+\tx). \]
%$U_{n}^{+}, 
%U_{n}^{-},
%D_{n}^{+}, 
%D_{n}^{-}$ 
As an analogy of this, it is natural to define power series
\begin{align*}
	E^{\uparrow}(x)&=\dsum_{n=0}^{\mug}\eup_{n+2}\ff{x^{n}}{n!}
	=0+0+4\ff{x^{2}}{2!}+12\ff{x^{3}}{3!}+56\ff{x^{4}}{4!}+
	240\ff{x^{5}}{5!}+\cd,
	\\
	E^{\downarrow}(x)&=\dsum_{n=0}^{\mug}\edown_{n+2}\ff{x^{n}}{n!}
=1+2\ff{x}{1!}+1\ff{x^{2}}{2!}+4\ff{x^{3}}{3!}+5\ff{x^{4}}{4!}+
	32\ff{x^{5}}{5!}+\cd.
%	\\
%	D^{+}(x)&=\dsum_{n=0}^{\mug}D_{n+2}^{+}\ff{x^{n}}{n!},
%	&
%	D^{-}(x)&=\dsum_{n=0}^{\mug}D_{n+2}^{-}\ff{x^{n}}{n!}.
\end{align*}
%Further, set $E(x)=\sx+\tx$ for convenience.
%(Do not confuse:$E(x)$ is not equal to $\ene(x)+\enw(x)$)
%\begin{lem}
%\begin{align*}
%	E^{\uparrow}(x)&=2\tan^{2}x(\sx+\tx),
%	&E^{\downarrow}(x)&=\secx+2\tx.
%%	\\D^{+}(x)&=2\sx\tx(\secx+\tx),
%%	\\D^{-}(x)&=\secx.
%%
%\end{align*}
%\end{lem}
%\begin{proof}
It then follows that 
%$F_{n}$
%$U_{n}^{+}$ makes sense for $n\ge 2$.
\begin{align*}
E^{\uparrow}(x)
&=\dsum_{n=0}^{\mug}\eup_{n+2}\ff{x^{n}}{n!}
\\
&=
\dsum_{n=0}^{\mug}
2\cdot n!
\k{
\sum_{
\substack{i, j, k\ge 0\\
(2i+1)+(2j+1)+k=n}
}\ff{E_{2i+1}}{(2i+1)!}
\ff{E_{2j+1}}{(2j+1)!}
\ff{E_{k}}{k!}}
\ff{x^{n}}{n!}
\\
&=2
\k{\sum_{i=0}^{\mug}
E_{2i+1}\ff{x^{2i+1}}{(2i+1)!}}
\k{\sum_{j=0}^{\mug}
E_{2j+1}\ff{x^{2j+1}}{(2j+1)!}}
\k{\sum_{k=0}^{\mug}
E_{k}\ff{x^{k}}{k!}}
\\&=2\tx\cdot\tx(\secx+\tx).
\end{align*}
\[
E^{\downarrow}(x)=
\dsum_{n=0}^{\infty}{\edown_{n+2}}\ff{x^{n}}{n!}
=
\dsum_{k=0}^{\infty}{E_{2k}}\ff{x^{2k}}{(2k)!}
+
2\dsum_{k=0}^{\infty}{E_{2k+1}}\ff{x^{2k+1}}{(2k+1)!}
=\secx+2\tx.
\]
%This is the desired expression. 
%It is easy to check that 
%\[
%\ene(x)+\enw(x)=E^{\uparrow}(x)+\edown(x).\]
%\end{proof}
%$D_{n}^{+}$ makes sense for $n\ge 2$.
%shift two steps:

%It is now easy to derive many relations 
%and differential equations of these series:
%\[
%E^{\uparrow}(x)+E^{\downarrow}(x)=(1+E(x))E(x)
%\]
%\[
%E^{\uparrow}(x)+\ene(x)=E(x)
%\]
%\[
%\ff{E^{\downarrow}(x)+D^{-}(x)}{2}=E(x)
%\]
%\[
%E^{\downarrow}(x)-E(x)=\tx
%\]
%\[
%\der{E^{\downarrow}(x)-E(x)}=D^{-}(x)^{2}
%\]
%Recall that 
%\[
%E^{\uparrow}(x)=2\tan^{2}x(\sx+\tx), \q 
%\enw(x)=\secx\tx(\sx+\tx).
%\]
After reading \cite{hp}, my question was: ``what do the coefficients of the formal power series 
\[\tanx\cdot \tanx(\sx+\tx) \]
count?" Looking at these three factors thoughtfully, I came up with the idea of 2nd-max-upper permutations and then 
proved subsequent results such as $\eup_{2n}=2\enw_{2n}$
(the even part of $E^{\uparrow}(x)$ is $2\secx\tan^{2}x$ while the one  of $\enw(x)$ is $\secx\tan^{2}x$. Equate the coefficients of $x^{2n-2}/(2n-2)!$ of these power series.)
Using formal power series, it is often easy to predict relations of sequences and give a bijective proof. This is a ``powerful" method.

%Using generating functions, it was much easy to prove Lemma in this way.

\section{Final remarks}

This is not the end of the story; any topic in mathematics has its subsequence. We leave two remaining problems on such enumeration.
%We encourage the reader to try.

\begin{itemize}
	\item 
	Heneghan-Petersen proved 
	$\di\lim_{n\to \mug}\ff{\enw_{n}}{\ene_{n}}=1$.
What is 
	$\di\lim_{n\to \mug}\ff{\edown_{n}}{\eup_{n}}$?
	\item Let 
\begin{align*}
D_{n}^{\up}&=|\{\te{down-up 2nd-max-upper permutations in $S_{n}$}\}|,
\\D_{n}^{\down}&=|\{\te{down-up 2nd-max-lower permutations in $S_{n}$}\}|.
\end{align*}
Then $E_{n}=D_{n}^{\up}+D_{n}^{\down}$.
Try computing the first couple of values of $D^{\up}_{n}$, $D^{\down}_{n}$. 
What are exponential generating functions of these sequences?
\end{itemize}

%We enumerate the number of min-max up-down permutations ($=E_{2n}^{\nearrow}$) with considering 
%$1$, $2n$, and $2n-1$.
%\[
%\s=\cd 
%\begin{matrix}
%	&   \fb{\phx}&   \\
%	\fb{1}&   &   
%\end{matrix}
%\cd
%\begin{matrix}
%	&   \fb{2n}&   \\
%	\fb{\phx}&   &   
%\end{matrix}
%\]
%Now con side the position of $2n-1$. 
%It it appears in the lower row, then 
%it must be the first position.
%\[
%\s=
%\begin{matrix}
%	&   \fb{\phx}&   \\
%	\fb{2n-1}&   &   
%\end{matrix}
%\cd 
%\begin{matrix}
%	&   \fb{\phx}&   \\
%	\fb{1}&   &   
%\end{matrix}
%\cd
%\begin{matrix}
%	&   \fb{2n}&   \\
%	\fb{\phx}&   &   
%\end{matrix}
%\]
%But this is impossible because $\s(1)=2n-1<\s(2)$ forces $\s(2)=2n$.
%So, $2n-1$ appears in the upper row.
%If it comes before $1$,
%\[
%\s=
%\begin{matrix}
%	&   \fb{2n-1}&   \\
%	\fb{\phx}&   &   
%\end{matrix}
%\cd 
%\begin{matrix}
%	&   \fb{\phx}&   \\
%	\fb{1}&   &   
%\end{matrix}
%\cd
%\begin{matrix}
%	&   \fb{2n}&   \\
%	\fb{\phx}&   &   
%\end{matrix}
%\]
%Swapping $2n-1$ and $2n$ gives max-min permutations.
%This correspond to $\enw_{2n}$.

%\section{sute}%:\UTF{2022}%
%\[
%E_{n}=U_{n}^{+}
%\]
%\begin{cor}
%$\enb=D_{n}^{+}$
%\end{cor}

%:\UTF{2022}%

\end{document}